\newtheorem{theorem}{Theorem}[section]
\newtheorem{example}[theorem]{Example}
\newtheorem{lemma}[theorem]{Lemma}
\newtheorem{remark}[theorem]{Remark}
\newcommand{\eat}[1]{}
\newcommand{\taumn}[2]{w_{#1,#2}}
\newcommand{\gmodp}{G/P}
\newcommand{\grass}[2]{\mathrm{Gr}_{{#1},{#2}}}
\newcommand{\xtaumodt}[2]{T\backslash\mkern-6mu\backslash X(\taumn{#1}{#2})}
\newcommand{\schbmodt}[1]{T\backslash\mkern-6mu\backslash X(#1)}
\begin{document}
\title{Smooth torus quotients of Richardson varieties in the Grassmannian}
\author{
Sarjick Bakshi
\thanks{Indian Institute of Technology, Bombay {\tt sarjick91@gmail.com}}
}

\date{}
\maketitle

\begin{abstract}

Let $k$ and $n$ be positive coprime integers with $k<n$. Let $T$ denote the subgroup of diagonal matrices in $SL(n,\mathbb{C})$. We study the GIT quotient of Richardson varieties $X^v_w$ in the Grassmannian $\grass{k}{n}$ by $T$ with respect to a $T$-linearised line bundle $\cal{L}$ corresponding to the Pl\"{u}cker embedding. We give necessary and sufficient combinatorial conditions for the quotient variety $T \backslash\mkern-6mu\backslash (X_w^v)^{ss}_T({\cal L})$ to be smooth.

\end{abstract}

\section{Introduction}\label{r.introduction}

A {\em Richardson variety} is the intersection of a {\em Schubert variety} and an {\em opposite Schubert variety} in a {\em generalised flag variety}. They first appeared in the work of Richardson \cite{richardson1992intersections}. Since its introduction there have been many applications of Richardson varieties. For instance, they were considered by Lakshmibai--Brion \cite{brion2003geometric} to obtain a geometric construction of a standard monomial theoretic basis of the global sections of line bundle on flag variety. They also appeared in the studies of $K$-theory for the flag varieties (\cite{brion2002positivity, lakshmibai2003richardson}). In this paper, we will study {\em geometric invariant theoretic (GIT)} quotients of Richardson varieties in the Grassmannian by a maximal torus for a polarised line bundle and obtain a combinatorial description of the smooth quotients.

Let $\grass{k}{n}$ denote the {\em Grassmannian variety} of $k$-subspaces in complex $n$-space. Let $G=SL(n,\mathbb{C})$. Let $T$ be the subgroup of all diagonal matrices in $G$, and $B$ the subgroup of all upper triangular matrices in $G$ and $B^{-}$ the subgroup of all lower triangular matrices in $G$. We know that $G$ acts transitively on $\grass{k}{n}$ by left multiplication. Let $e_1,\ldots,e_n$ be the standard basis of $\mathbb{C}^n$. Let $P$ be the stabiliser $ \langle e_1,e_2,\ldots,e_r \rangle$ in $G$. So the Grassmannian variety can also be realised as the homogeneous space $G/P$. We note that $P$ is a parabolic subgroup containing $B$. Let $W_P$ denote the {\em Weyl group} of $P$. Let $W^P = W/W_P$ denote the set of {\em minimal length coset representative}. The $T$-fixed points in $G/P$ are $e_w=wP/P$ with $w \in W^{P}$. The $B$-orbit $C_w$ of $e_w$ is called a {\em Schubert cell} and it is an affine space of dimension $l(w)$. The closure of $C_w$ in $\gmodp$ is the {\em Schubert variety $X(w)$}. Dually, one defines the {\em opposite Schubert cell $C^v$} to be the $B^{-}$-orbit through $e_v$ and the closure of the opposite Schubert cell is called the {\em opposite Schubert variety}. The {\em Richardson variety $X^v_w$} is the intersection of the Schubert variety $X(w)$ with the opposite Schubert variety $X^v$. It is shown to be non empty if and only if $v \leq w$ in the Bruhat order. Let $S_d$ denote the symmetric group in $d$ letters. We observe that $W_{P}= S_k \times S_{n-k}$,  so the minimal length coset representatives of $W^P$ 
can be identified with \[\{w \in W | w(1) < w(2) < \ldots < w(k), w(k+1) < w(k+2) < \ldots < w(n)\}.\] Let \[I(k,n) = \{(i_1,i_2,..i_k) | 1 \leq i_1 < i_2  \cdots <i_k \leq n \}.\]Then there is a natural identification of  $W^P$ with $I(k,n)$ sending $w$ to $(w(1),w(2),\ldots,w(k))$.

In \cite{kreiman2002richardson}, Lakshmibai--Kreiman gave a self-contained presentation of standard monomial theory for unions of Richardson varieties in the Grassmannian. In the same paper, they determine a basis for the tangent space and gave a criteria for smoothness for $X^v_w$ at any $T$-fixed point $e_{\tau}$. Billey--Coskun \cite{billey2012singularities} introduces a generalisation of Richardson varieties called the {\em intersection varieties}. They 
characterize the smooth Richardson varieties in terms of vanishing conditions on certain products of cohomology classes for Schubert varieties. Richardson varieties in the Grassmannian are also studied in \cite{stanley1977some}, where these varieties are called {\em skew Schubert varieties}.  

Let $p: \grass{k}{n} \hookrightarrow \mathbb{P}(\bigwedge^k\mathbb{C}^n)$ denote the {\em Pl\"{u}cker embedding} where a $k$-dimensional vector space is sent to its $k$ wedges. Let $\cal{M}$ denote the pullback $p^*{\cal{O}}(1)$. We note that $\cal{M}$ is a $T$-linearised line bundle on $\grass{k}{n}$. In \cite{kumar2008descent}, Kumar showed that ${\cal{L}}:={\cal{M}}^n$ descends to the GIT quotient of the $\grass{k}{n}$ by the maximal torus $T$. Let $({\grass{k}{n}})^{ss}_T({\cal L})$(respectively, $({\grass{k}{n}})^{s}_T({\cal L})$) denote the set of all semistable (respectively, stable) points with respect to the $T$-linearized line bundle ${\cal L}$. When $k$ and $n$ are coprime, Skorobogatov \cite{skorobogatov1993swinnerton} and independently Kannan\cite{kannan1998torus} have shown that all semistable points are stable. They further prove that the GIT quotient $T\backslash \backslash (\grass{k}{n}^{ss})_T({\cal{L}})$ is smooth.

In \cite{kannan2009torusA}, Kannan--Sardar showed that there is a unique minimal Schubert Variety $X(w_{k,n})$ in $\grass{k}{n}$ admitting semistable points with respect to the line bundle $\mathcal{L}$ whenever $k$ and $n$ are coprime. However, there was a computational error in their final result which has been corrected in Bakshi--Kannan--Subrahmanyam \cite{bakshi2019torus} while obtaining a simpler proof. Let $w_{k,n} = (a_1,a_2,\ldots,a_k)$. In \cite{kannan2018torus}, Kannan--Paramasamy--Pattanayak--Upadhyay gave a condition on $v$ and $w$ for which the semistable locus in the Richardson variety $X^v_w$ in $\grass{k}{n}$ is non empty. The GIT quotient $\xtaumodt{k}{n}^{ss}_{T}({\cal L})$ is shown to be smooth in \cite{bakshi2019torus} by Bakshi--Kannan--Subrahmanyam. In the same paper, they showed that there exist a unique minimal Schubert variety $X^{v_{k,n}}_{w_{k,n}}$ in $\grass{k}{n}$ admitting semistable points with respect to the line bundle $\mathcal{L}$. They show $v_{k,n} = (1,a_1,\ldots, a_{k-1})$. In  \cite{bakshiinprep}, Bakshi--Kannan--Subrahmanyam gave a combinatorial criteria on $w$ for which the GIT quotient $T \backslash\mkern-6mu\backslash X(w)^{ss}_T({\cal L})$ is smooth. More recently, a similar criteria is also obtained by Chary--Pattanayak \cite{bonala2021torus}. 

In this paper, we study the GIT quotients of Richardson varieties $X_w^v$ in the Grassmannian generalising the result of Bakshi--Kannan--Subrahmanyam in \cite{bakshiinprep}. We obtain a condition on $v$ and $w$ for which $T \backslash\mkern-6mu\backslash (X_w^v)^{ss}_T({\cal L})$ is smooth. Our main result is the following:

\begin{theorem} \label{maintheorem}
Let $k$ and $n$ be coprime. Let $v = (c_1, c_2,\ldots, c_k)$ and $w = (b_1,b_2,\ldots, b_k)$ be such that $v \leq v_{k,n}$ and $w \geq w_{k,n}$. \eat{ Let $c_1 = 1$. Let $b_i \geq a_i$ for all $1 \leq i \leq n$ and $c_i \leq a_{i-1}$ for all $2 \leq i \leq n$.} Let $X_{w_1}^{v_1},\ldots, X_{w_r}^{v_r}$ be $r$ components in the singular locus of $X(w)$. Then the following are equivalent
\begin{itemize}
 \item[$(1)$] $T \backslash\mkern-6mu\backslash (X_w^v)^{ss}_T({\cal L})$ is smooth.
 \item[$(2)$] We have $ w_i \ngeqslant w_{k,n}$ and $ v_i \nleqslant v_{k,n} $ for all $i$. 
 \item[$(3)$] Whenever $b_j \geq b_{j-1}+2$, we have $a_j \geq b_{j-1}+1$, and whenever $c_j \geq c_{j-1}+2$, we have $ a_{j-1} \leq c_j+1$.
 \end{itemize} 
\end{theorem}

\section{Acknowledgment}

The author is grateful to Senthamarai Kannan and K V Subrahmanyam for carefully reading the paper and providing valuable comments and suggestions. The author would also like to thank Dipendra Prasad for constant encouragement. The author is supported by a postdoctoral fellowship at Indian Institute of Technology, Bombay. 
 
\section{Singular locus of Richardson varieties}

Let $1 \leq k <  n$. Let $\grass{k}{n}$ be the {\em Grassmannian variety} of $k$ dimensional subspace in $n$ dimensional complex vector space. We recall that the {\em Pl\"{u}cker embedding}
\[ p: \grass{k}{n} \longrightarrow \mathbb{P}(\bigwedge^k\mathbb{C}^n) \]
takes a $k$-dimensional vector space $V$ and maps it to $[\wedge^k(V)]$. 

The map $p$ gives a closed embedding of $\grass{k}{n}$ inside $\mathbb{P}(\bigwedge^k\mathbb{C}^n)$, hence giving a projective variety structure to the Grassmannian. Let 
\[ I(k,n) = \{(i_1,i_2,..i_k) | 1 \leq i_1 < i_2  \cdots <i_k \leq n \}.  \]

We recall that $G=SL(n,\mathbb{C})$ acts transitively on $\grass{k}{n}$. Denoting $e_1,\ldots,e_n$ as the standard basis of $\mathbb{C}^n$ we observe that the stabiliser of $\langle e_1,e_2,\ldots,e_k \rangle$ is given by the subgroup $P = \begin{bmatrix}
          * & * \\
	    0_{n-k,k}  &  * 
          \end{bmatrix}$. Thus $\grass{k}{n}$ gets identified with $G/P$. Let $T$ be a {\em maximal torus} consisting of the diagonal matrices in $G$. Let $B$ denote the {\em Borel subgroup} consisting of all upper triangular matrices in $G$ and $B^{-}$ the {\em opposite Borel subgroup} consisting of all lower triangular matrices in $G$. Let $S_d$ denote the symmetric group in $d$ letters. We know that the {\em Weyl group} of $G$ with respect to $T$ is $S_n$. The Weyl group $W_P$ of the parabolic subgroup $P$ is given by $S_k \times S_{n-k}$. The set of {\em minimal length coset representatives} $W^P = W/W_P$ gets identified with the set $I(k,n)$: $w \mapsto (w(1),w(2),\ldots,w(r))$. For $w \in I(k,n)$, let $C_w := BwP/P$(respectively, $C^w := B^{-}wP/P$) denote the {\em Schubert cell} (respectively, {\em opposite Schubert cell}). The {\em Schubert variety $X(w)$}(respectively, {\em opposite Schubert variety $X^w$}) is $\overline{C_w}$(respectively, $\overline{C^w}$) inside $\grass{k}{n}$. For $v,w \in I(k,n)$, the {\em Richardson variety $X_w^v$} is defined as $X(w) \cap X^v$. Note that this intersection is a scheme-theoretic intersection. 
          
Let $v,w \in I(k,n)$. Let $w = (w_1, w_2, \ldots w_k)$ and $v = (v_1,v_2,\ldots v_k)$. We define a partial order $\leq$ on $I(k,n)$ as $v \leq w$ if and only if $v_t \leq w_t$ for all $1 \leq t \leq k$. This order is called the {\em Bruhat order} on $I(r,n)$. We recall from \cite{kreiman2002richardson} that $X_w^v$ is non empty if and only $v \leq w$. One also notes that $X^{v_1}_{w_1} \subset X^{v_2}_{w_2} $ if and only if $w_1 \leq w_2$ and $v_1 \geq v_2$ in the Bruhat order.

The singular loci of Schubert varieties in $\grass{k}{n}$ were determined by Lakshmibai and Weyman~\cite{lakshmibai1990multiplicities}. We briefly recall their description of the singular locus. Let $w = (w_1,w_2,\ldots w_k) \in I(k,n)$.  We associate to each $w$ a weakly increasing sequence $\bf{w} = ({ \bf w_1},{\bf w_2},\ldots,{\bf w_k})$ where ${\bf w_i} = w_{i}-i$, so $0 \leq {\bf w_1} \leq {\bf w_2} \leq \ldots \leq {\bf w_k} \leq n-k$. Each such  $\bf{w}$ can be represented by Young diagram ${\cal{Y}}(w)$, in a $k \times (n-k)$ rectangle by putting ${\bf w_i}$ boxes in the $i$-th row from bottom. Note the filling of the boxes are from left to right.

Recall the following theorem from \cite{lakshmibai1990multiplicities}\footnote{Note that the notation we use is different from that used in \cite{lakshmibai1990multiplicities}, they work with non-increasing sequences.}.
\begin{theorem}[Theorem 5.3 \cite{lakshmibai1990multiplicities}] 
\label{LW90}
Let $X(w)$ be a Schubert variety in the Grassmannian. 
Let $\bf{w} = (p_1^{q_1},\ldots, p_r^{q_r}) = (\underbrace{p_1,\ldots p_1}_\text{$q_1$ times}\ldots,\underbrace{p_r,\ldots p_r}_\text{$q_r$ times})$  be the non-zero parts of the increasing sequence ${\bf w}$ with 
$1 \leq  {\bf p_1} < {\bf p_2} \ldots < {\bf p_r} \leq n-k$. The singular locus  $X(w)$ consists of $r-1$ components. The components are given by the Schubert varieties corresponding to the  Young diagrams 
${\cal{Y}}(w_1), {\cal{Y}}(w_2), \ldots, {\cal{Y}}(w_{r-1})$  where the sequences ${\bf w_i}$ are given by
\[ {\bf w_i} = (p_1^{q_1},\ldots, p_{i-1}^{q_{i-1}},(p_i-1)^{q_i+1}, p_{i+1}^{q_{i+1}-1},p_{i+2}^{q_{i+2}},\ldots,p_r^{q_r}), \]
for 	$1 \leq i \leq r-1$ and $1 \leq p_i < p_{i+1}$.
\end{theorem}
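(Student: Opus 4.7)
The plan is to reduce to a count at $T$-fixed points and then read the answer off the Young diagram ${\cal{Y}}(w)$, following the strategy of~\cite{lakshmibai1990multiplicities}. Since $X(w)$ is $B$-stable and smoothness is preserved under the $B$-action, $\mathrm{Sing}(X(w))$ is closed and $B$-invariant, hence a union of Schubert varieties $X(\tau)$ with $\tau \le w$. The irreducible components correspond to the elements $\tau \le w$ that are maximal in the Bruhat order among those for which $e_\tau$ is a singular point. The task therefore splits into (i) deciding when $X(w)$ is singular at a given $T$-fixed point $e_\tau$, and (ii) identifying the maximal such $\tau$.

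For (i), I would invoke the standard criterion that $X(w)$ is smooth at $e_\tau$ if and only if $\dim T_{e_\tau} X(w) = \ell(w) = |{\cal{Y}}(w)|$. The tangent space $T_{e_\tau}X(w)$ carries a $T$-weight basis indexed by reflections $s_{i,j}$ (with $i \le k < j$ after identifying $W^P \cong I(k,n)$) such that $s_{i,j}\tau \le w$ in the Bruhat order. This count translates to the language of Young diagrams: each admissible reflection corresponds to a box-slide inside ${\cal{Y}}(w)$ relative to ${\cal{Y}}(\tau)$, so the excess $\dim T_{e_\tau}X(w) - \ell(w)$ becomes an explicit combinatorial quantity depending on how ${\cal{Y}}(\tau)$ sits inside ${\cal{Y}}(w)$.

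For (ii), note that the weakly increasing sequence ${\bf w}$ has exactly $r$ distinct positive values, so ${\cal{Y}}(w)$ has exactly $r-1$ concave (inside) corners, one between the levels $p_i$ and $p_{i+1}$ for $1 \le i \le r-1$. A direct inspection using the slide count from step (i) shows that $\dim T_{e_\tau} X(w) > \ell(w)$ precisely when ${\cal{Y}}(\tau)$ leaves an inside corner of ${\cal{Y}}(w)$ uncovered in a prescribed way. Among such $\tau$, the maximal element in the Bruhat order at the $i$-th corner is produced by the minimal ``hook removal'' there: one deletes one row at level $p_{i+1}$ and inserts an extra row at level $p_i - 1$, yielding the diagram ${\cal{Y}}(w_i)$ with ${\bf w_i}$ as in the statement. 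A quick arithmetic check gives $|{\cal{Y}}(w)| - |{\cal{Y}}(w_i)| = q_i + 1 + (p_{i+1} - p_i)$, the hook length at the $i$-th corner, confirming $w_i < w$. The $w_i$ are pairwise Bruhat-incomparable because their diagrams differ on disjoint corners, so they give exactly $r-1$ distinct components.

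The main obstacle is step (i): one must carry out the tangent-space enumeration precisely enough to extract the combinatorial criterion ``an inside corner of ${\cal{Y}}(w)$ is uncovered by ${\cal{Y}}(\tau)$'' from the abstract condition $s_{i,j}\tau \le w$. This is the key computation of~\cite{lakshmibai1990multiplicities}, carried out there via a multiplicity analysis; once it is in place, the identification in (ii) of the maximal singular $\tau$ with the $w_i$ reduces to a routine inspection of the concave corners of ${\cal{Y}}(w)$.
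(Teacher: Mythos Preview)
The paper does not prove this statement. Theorem~\ref{LW90} is quoted verbatim as ``Theorem 5.3 \cite{lakshmibai1990multiplicities}'' and is used as a black box; there is no proof in the present paper to compare your proposal against. What you have written is a reasonable outline of the Lakshmibai--Weyman argument itself (reduction to $T$-fixed points via $B$-stability, tangent-space dimension count, then the combinatorial identification of the maximal singular $\tau$ with the hook-removed diagrams), and your codimension computation $|{\cal Y}(w)|-|{\cal Y}(w_i)|=(q_i+1)+(p_{i+1}-p_i)$ is correct. But as you yourself note, the substantive content is entirely in step~(i), and your proposal defers that step back to \cite{lakshmibai1990multiplicities}; so the sketch is more of a roadmap than a proof. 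For the purposes of this paper, simply citing the result---as the author does---is the intended treatment.
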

A box in a Young diagram is said to be a {\em peak} if it does not have any box to its {\em south} or {\em east}. A box is said to be a {\em valley} if there are boxes in its south and east but there is no box to its {\em southeast} in the Young diagram. An easy way to remember description of the irreducible components of the singular locus of $X(w)$ is as follows: they are the Schubert varieties in correspondence with Young diagram ${\cal{Y}}(w_i)$ obtained from ${\cal{Y}}(w)$ by removing the hook which consists of two adjacent peaks and one valley.

\begin{example} Let us consider the Schubert variety $X((3,5,7,9))$ in $\grass{4}{9}$.  The Young diagram corresponding to this Schubert variety looks like 
\[ \yng(5,4,3,2).\]
The singular locus obtained by removing the hooks consists of Schubert varieties $X((2,3,7,9)),X((3,4,5,7))$ and $X((3,5,6,7))$. The corresponding Young diagrams are given by
\[ \yng(5,4,1,1), \yng(5,2,2,2) , \yng(3,3,3,2) .\]  
\end{example}
 
We can similarly give a description of the singular locus of opposite Schubert varieties in the Grassmannian. Let $v = (v_1,v_2,\ldots, v_k) \in I(k,n)$. Let $v'_i = n + 1 - v_{k-i}$. Let $v' = (v'_1,v'_2,\ldots, v'_k)$. We first observe that $v' \in I(k,n)$. Let $w_0$ be the unique longest element of $W$. We recall from \cite{kreiman2002richardson} that since $X^v$ is isomorphic to $w_0X(v)$, we have $X^v$ is isomorphic to the Schubert variety $X(v')$. 

We associate an {\em opposite Young diagram} ${\cal{Y}}^{v}$ by removing the Young diagram ${\cal{Y}}({v})$ from the $k \times (n-k)$ rectangle. So the opposite Young diagram ${\cal{Y}}^{v}$ will have $n-k-v_i+i$ boxes in the $i$-th row from bottom. A box in an opposite Young diagram is said to be a {\em peak} if it doesnot have any box to its {\em north} or {\em west}. A box is said to be a {\em valley} if there are boxes in its north and west and if there is no box to its {\em northwest} in the opposite Young diagram. The irreducible components of the singular locus of the opposite Schubert variety $X^v$ are the opposite Schubert varieties $X^v$ such that the associated opposite Young diagram ${\cal{Y}}^{v_i}$ are obtained from ${\cal{Y}}^{w}$ by removing the hook which consists of two adjacent peaks and one valley.

\begin{example} Let us consider the opposite Schubert variety $X^{(2,4,5,7)}$ in $\grass{4}{9}$. We first mark the Young diagram ${\cal{Y}}(w)$ in red inside the $4 \times 5$ rectangle. 
\[
\begin{ytableau}
       \none & *(red)& *(red) & *(red) &  & \\
      \none & *(red) &*(red) & &  &\\
           \none &*(red) &*(red) & & & \\
      \none &*(red) & & & &
\end{ytableau} \] 

After removing the red rectangle, we obtain the following opposite Young diagram ${\cal{Y}}^{v}$

\[
\hspace{1 cm}\ydiagram{2+2,1+3,1+3,4}.
\]

The irreducible components of the singular locus obtained by removing the hooks are the opposite Schubert varieties $X^{(4,5,6,7)}, X^{(2,4,7,8)}$ whose opposite Young diagrams are given by the following 
\[\ydiagram{2+2,2+2,2+2,2+2}, \ydiagram{3+1,3+1,1+3,4} .\] 

\end{example}

We now give a description for the singular locus of Richardson varieties in the Grassmannian in terms of diagrams. Using Kleiman's transversality theorem \cite{kleiman1974transversality}, Billey--Coskun characterized the singular locus of Richardson varieties. We recall the following theorem from Billey--Coskun \cite{billey2012singularities}, which we are going to use in this paper. 

\begin{theorem}\cite[Corollary 2.10 ]{billey2012singularities} \label{billeycouskin}
Let $X^v_w$ be a non empty Richardson variety in $\grass{k}{n}$. Let $X(w)_{sing}$ and $X^v_{sing}$ denote the singular loci of the Schubert variety $X(w)$ and the opposite Schubert variety $X^v$ respectively. Then the singular locus of $X^v_w$ is the union of the Richardson varieties 
\[ (X^v_w)_{sing} = (X(w)_{sing} \cap X^v) \cup (X^v_{sing} \cap X(w)). \]
\end{theorem}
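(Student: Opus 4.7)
The plan is to prove the theorem by reducing the description of $(X^v_w)_{sing}$ to its $T$-fixed points and then establishing a transversality of tangent spaces at each such fixed point.

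\textbf{Reduction to $T$-fixed points.} The three subsets $(X^v_w)_{sing}$, $X(w)_{sing}\cap X^v$, and $X^v_{sing}\cap X(w)$ are closed and $T$-stable in $X^v_w$. By Theorem \ref{LW90} and its opposite-Schubert analogue discussed earlier, each decomposes as a finite union of Richardson subvarieties. An irreducible Richardson subvariety is contained in the singular locus iff its generic point is; by $T$-equivariance and Bialynicki--Birula contraction, this can be tested at the distinguished $T$-fixed point $e_\tau$ of that component. Hence it suffices to prove, for every $T$-fixed point $e_\tau\in X^v_w$ (equivalently $v\le\tau\le w$), that $e_\tau$ is singular in $X^v_w$ iff it is singular in $X(w)$ or in $X^v$.

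\textbf{Local transversality.} The key input is the tangent-space equality
\[
T_{e_\tau}X(w) + T_{e_\tau}X^v \;=\; T_{e_\tau}(G/P).
\]
At the $T$-fixed point $e_\tau$, the tangent space of $G/P$ decomposes as a direct sum of one-dimensional $T$-weight spaces indexed by a set of roots. The Kreiman--Lakshmibai tangent-space description of Schubert varieties presents $T_{e_\tau}X(w)$ as the span of a combinatorially specified family of these weight lines, and dually for $T_{e_\tau}X^v$. The claim is that the two families jointly span all the weights of $T_{e_\tau}(G/P)$. This reflects the fact that $X(w)$ is $B$-stable while $X^v$ is $B^-$-stable and that $B$, $B^-$ intersect in $T$; it is essentially Kleiman's transversality theorem \cite{kleiman1974transversality} applied to the opposite pair of Borels.

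\textbf{Conclusion.} Given this transversality, the scheme-theoretic intersection at $e_\tau$ satisfies
\[
\dim T_{e_\tau}(X^v_w) \;=\; \dim T_{e_\tau}X(w) + \dim T_{e_\tau}X^v - \dim T_{e_\tau}(G/P),
\]
while the dimension formula $\dim X^v_w = \dim X(w)+\dim X^v-\dim G/P$ holds for the variety. Subtracting, $\dim T_{e_\tau}(X^v_w)-\dim X^v_w$ equals the sum of the corresponding defects for $X(w)$ and $X^v$ at $e_\tau$. Since all three defects are non-negative, $e_\tau$ is a smooth point of $X^v_w$ iff it is a smooth point of both $X(w)$ and $X^v$. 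Combining with Step 1 yields the asserted identity of singular loci.

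\textbf{Main obstacle.} The principal difficulty is verifying the weight-space complementarity underlying Step 2 at every $T$-fixed point $e_\tau\in X^v_w$. Concretely, one must identify the $T$-weights occurring in $T_{e_\tau}X(w)$ and $T_{e_\tau}X^v$ via reflections $s_\beta$ satisfying appropriate Bruhat-order constraints relative to $\tau$, and check that their union is the full weight set of $T_{e_\tau}(G/P)$. Once this is in place, the rest of the argument is a clean dimension count and a formal closure argument.
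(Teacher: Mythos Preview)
The paper does not prove this statement; it is quoted verbatim from Billey--Coskun \cite[Corollary~2.10]{billey2012singularities} and used as a black box in the proof of Theorem~\ref{maintheorem}. There is therefore no ``paper's own proof'' to compare against.

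Your outline is essentially the Billey--Coskun argument and is correct in substance, but Step~1 contains a gap. You assert that $(X^v_w)_{sing}$ decomposes as a union of Richardson subvarieties by appealing to Theorem~\ref{LW90}; however, Theorem~\ref{LW90} only treats Schubert varieties, and the Richardson singular locus being such a union is precisely the content of the theorem you are proving. More to the point, a $T$-stable closed subset of $G/P$ need not be a union of Richardson varieties, so reducing to $T$-fixed points is not automatic for the inclusion $(X^v_w)_{sing}\subseteq(X(w)_{sing}\cap X^v)\cup(X^v_{sing}\cap X(w))$.

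Fortunately the reduction is unnecessary: your Steps~2--3 already work at \emph{every} point $p\in X^v_w$, not just at $T$-fixed points. The identity $T_p(X^v_w)=T_pX(w)\cap T_pX^v$ holds at all points of a scheme-theoretic intersection, and the transversality $T_pX(w)+T_pX^v=T_p(G/P)$ holds everywhere by the standard Kleiman trick you allude to: for generic $g\in G$ the translate $gX(w)$ meets $X^v$ transversally, and writing $g=b^-b$ with $b^-\in B^-$, $b\in B$ (possible on the dense big cell) one has $gX(w)\cap X^v=b^-X(w)\cap X^v\cong X(w)\cap X^v$. With transversality established globally, your dimension count gives the pointwise equivalence of smoothness directly, and Step~1 can be deleted.
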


Let $w = (w_1, w_2, \ldots w_k)$ and $v = (v_1,v_2,\ldots v_k) \in I(k,n)$ be such that $v \leq w$. Let $X^v_w$ be a Richardson variety. We have associated a Young diagram ${\cal{Y}}(w)$ to the Schubert variety $X(w)$ and an opposite Young diagram ${\cal{Y}}^v$ to the opposite Schubert variety $X^v$. We now associate a {\em skew Young diagram ${\cal{Y}}^v_w$} to the Richardson variety $X^v_w$. It is obtained by removing ${\cal{Y}}^v$ from ${\cal{Y}}_w$. A box in a skew Young diagram is said to be a {\em peak} if either it does not have any box to its {\em south} or {\em east} or it doesnot have any box to its {\em north} or {\em west}.  A box in a skew Young diagram is said to be a {\em valley} if either there are boxes in its south and east and it has no box in its {\em southeast} or if there are boxes in its north and west and  it has no box in its {\em southeast} in the skew Young diagram. The irreducible components of the singular locus of the Richardson variety $X^v_w$ are the Richardson varieties $X^{v_i}_{w_{i}}$ such that the associated skew Young diagram ${\cal{Y}}^{v_i}_{w_i}$ are obtained from ${\cal{Y}}^{w}$ by removing the hook which consists of two adjacent peaks and one valley. 

\begin{example}
Let $X(w), X^v$ and $X^v_w$ denote the Schubert variety, opposite Schubert variety and the Richardson variety in $\grass{4}{9}$ for $v = (2,4,5,7)$ and $w = (3,5,7,9)$. The Young diagram ${\cal{Y}}(w)$ corresponding to the Schubert variety $X(w)$ is the subdiagram of $4 \times 5$ rectangle consisting of the blue and red boxes. The opposite Young diagram  ${\cal{Y}}^{v}$ corresponding to the opposite Schubert variety $X^v$ is the subdiagram consisting of the blue and green boxes. And the skew Young diagram ${\cal{Y}}^v_w$ corresponding to the Richardson variety $X^v_w$ corresponds to the region enclosed by the blue boxes

\[
\begin{ytableau}
       \none & *(red)& *(red) & *(red) & *(blue)  & *(blue)\\
      \none & *(red) &*(red) & *(blue)& *(blue) & *(green) \\
           \none &*(red) &*(red) &*(blue)& *(green) &*(green) \\
      \none &*(red) &*(blue) &*(green)&*(green)&*(green)
\end{ytableau}. \] 

We first note that there are three Schubert varieties which are in the singular locus of the Schubert variety $X(w)$ namely $X((2,3,7,9)), X((3,4,5,9))$ and $X((3,5,6,7))$ and  two opposite Schubert varieties which are in the singular locus of the opposite Schubert variety $X^{(4,5,6,7)}$ and $X^{(2,4,7,8)}$. So the singular locus of the Richardson variety is the union of the following Richardson varieties $X^{(2,4,5,7)}_{(2,3,7,9)}, X^{(2,4,5,7)}_{(3,4,5,9)}, X^{(2,4,5,7)}_{(3,5,6,7)},X^{(4,5,6,7)}_{(3,5,7,9)}$ and $X^{(2,4,7,8)}_{(3,5,7,9)}$. But we observe that $X^{(2,4,5,7)}_{(2,3,7,9)}$ and $X^{(4,5,6,7)}_{(3,5,7,9)}$ is empty since $(2,4,5,7) \nleq (2,3,7,9)$ and $(4,5,6,7)\nleq (3,5,7,9)$ in the Bruhat order. So the non empty Richardson varieties in the singular locus are $X^{(2,4,5,7)}_{(3,4,5,9)}, X^{(2,4,5,7)}_{(3,5,6,7)}$ and $X^{(2,4,7,8)}_{(3,5,7,9)}$. Indeed, we note that the skew Young diagram associated with $X^v_w$ has three hooks which consists of two adjacent peaks and one valley. The skew Young diagram corresponding to  $X^{(2,4,5,7)}_{(3,4,5,9)}, X^{(2,4,5,7)}_{(3,5,6,7)}$ and $X^{(2,4,7,8)}_{(3,5,7,9)}$ are marked by the blue boxes in the $4 \times 5$ rectangle

\eat{\[
\begin{ytableau}
       \none & *(red)& *(red) & *(red) & *(blue) &*(blue) \\
      \none & *(red) &*(red) & *(blue)& *(blue)\\
           \none &*(red)   \\
      \none &*(red) 
\end{ytableau}    \begin{ytableau}
       \none & *(red)& *(red) & *(red) & *(blue) &*(blue) \\
      \none & *(red) &*(red)  \\
           \none &*(red) &*(red)   \\
      \none &*(red) & *(blue)
\end{ytableau} 
\begin{ytableau}
       \none & *(red)& *(red) & *(red)  \\
      \none & *(red) &*(red) & *(blue) \\
           \none &*(red) &*(red) & *(blue) \\
      \none &*(red) & *(blue)
\end{ytableau} \]

\[
\ytableausetup{notabloids}
\begin{ytableau}
       \none & \none & \none & \none & *(blue) & *(blue)\\
      \none & \none &\none & *(blue) & *(blue)\\
           \none & \none   \\
      \none & \none 
\end{ytableau}    \begin{ytableau}
       \none &\none& \none & \none & *(blue) &*(blue) \\
      \none & \none & \none  \\
           \none & \none & \none   \\
      \none & \none & *(blue)
\end{ytableau} \begin{ytableau}
       \none & \none & \none & \none  \\
      \none & \none & \none & *(blue)  \\
           \none & \none & \none & *(blue)  \\
      \none &\none & *(blue)
\end{ytableau} \] 
}

\[
\begin{ytableau}
       \none & *(white)& *(white) & *(white) & *(blue)  & *(blue)\\
      \none & *(white) &*(white) & *(white)& *(white) & *(white) \\
           \none &*(white) &*(white) &*(white)& *(white) &*(white) \\
      \none &*(white) &*(blue) &*(white)&*(white)&*(white)
\end{ytableau},
\begin{ytableau}
       \none & *(white)& *(white) & *(white) & *(white)  & *(white)\\
      \none & *(white) &*(white) & *(blue)& *(white) & *(white) \\
           \none &*(white) &*(white) &*(blue)& *(white) &*(white) \\
      \none &*(white) &*(blue) &*(white)&*(white)&*(white)
\end{ytableau}, \begin{ytableau}
       \none & *(white)& *(white) & *(white) & *(white)  & *(blue)\\
      \none & *(white) &*(white) & *(white)& *(white) & *(white) \\
           \none &*(white) &*(white) &*(blue)& *(white) &*(white) \\
      \none &*(white) &*(blue) &*(white)&*(white)&*(white)
\end{ytableau}.
\]
\eat{\[
\ytableausetup{notabloids}
\begin{ytableau}
       \none &\none& \none & \none & *(blue) &*(blue) \\
      \none & \none & \none & \none &*(blue)  \\
           \none & \none & \none   \\
      \none & \none & *(blue)
\end{ytableau},
 \begin{ytableau}
       \none & \none & \none & \none & *(blue)  \\
      \none & \none & \none & \none \\
           \none & \none & *(blue) & \none  \\
      \none & *(blue)
\end{ytableau} \] }

\end{example}

\section{Minimal dimensional Richardson varieties admitting semistable points}

We recall from \cite{bakshi2019torus}, that there exists a unique minimal dimensional Schubert variety $X(w_{k,n})$ in $\grass{k}{n}$ which admits semistable point for $k$ and $n$ coprime. The existence of the unique Schubert variety was first shown by Kannan--Sardar \cite{kannan2009torusA}, however there was a computational error in their description of $w_{k,n}$. Later  Bakshi--Kannan--Subrahmanyam \cite{bakshi2019torus} corrected the error and came up with a simpler proof for the same.

\begin{theorem}\cite[Proposition 2.2]{bakshi2019torus}
Let $k$ and $n$ be coprime. Then $w_{k,n} = (a_1,a_2,\ldots,a_k)$ where $a_i$ is the smallest integer such that $a_i \cdot k \geq i \cdot n$.
\end{theorem}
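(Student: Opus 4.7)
The plan is to identify $w_{k,n}$ as the unique Bruhat-minimal element $w \in W^P$ for which $X(w)^{ss}_T(\mathcal{L}) \neq \emptyset$, by showing this latter set coincides with $\{w : w \geq (a_1,\ldots,a_k)\}$. I will use Mumford's numerical criterion for torus actions: $X(w)^{ss}_T(\mathcal{L}) \neq \emptyset$ precisely when $0$ lies in the convex hull, in the character lattice $X(T)_{\mathbb{R}} = \mathbb{R}^n/\mathbb{R}(1,\ldots,1)$, of the $T$-weights of $\mathcal{L}$ at the fixed points $\{e_\tau : \tau \leq w,\ \tau \in W^P\}$. A direct computation with $\mathcal{L} = \mathcal{M}^n \leftrightarrow n\omega_k$ shows the weight at $e_\tau$ is represented by $\chi^\tau \in \mathbb{Z}^n$ with $\chi^\tau_j = n-k$ if $j \in \{\tau_1,\ldots,\tau_k\}$ and $\chi^\tau_j = -k$ otherwise. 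The convex-hull condition then reduces to: there exist $c_\tau \geq 0$ supported on $\{\tau \leq w\}$ with $\sum_\tau c_\tau = 1$ and $\sum_{\tau \ni j} c_\tau = k/n$ for every $j \in \{1,\ldots,n\}$.

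For the lower bound, I would project this identity onto the coordinates indexed by $[1,m]$: summing yields $\sum_\tau c_\tau\,|\tau \cap [1,m]| = km/n$. Since $\tau \leq w$ componentwise forces $|\tau \cap [1,m]| \geq |\{i : w_i \leq m\}|$, we obtain $|\{i : w_i \leq m\}| \leq km/n$ for every $m$. Specialising to $m = w_j$ gives $j \leq kw_j/n$, i.e., $w_j \geq \lceil jn/k \rceil = a_j$, so any $w$ with non-empty semistable locus dominates $(a_1,\ldots,a_k)$ in the Bruhat order.

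For the matching upper bound I would construct the distribution explicitly for $w = (a_1,\ldots,a_k)=:\sigma$. Define, for $r = 0, 1, \ldots, n-1$, the tuple $\tau^{(r)} \in I(k,n)$ as the sorting of $\{(a_i - r) \bmod n : 1 \leq i \leq k\}$ into $\{1,\ldots,n\}$ (with $0$ identified with $n$). Coprimality of $k$ and $n$ ensures that $a_1, \ldots, a_k$ remain distinct modulo $n$, so each $j$ lies in $\tau^{(r)}$ for exactly $k$ values of $r$; the uniform weights $c_{\tau^{(r)}} = 1/n$ then satisfy $\sum_{\tau \ni j} c_\tau = k/n$. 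What remains is to verify $\tau^{(r)} \leq \sigma$ for every $r$, which is equivalent to $|(S - r) \bmod n \cap [1,m]| \geq |S \cap [1,m]|$ for all $m$, where $S = \{a_1,\ldots,a_k\}$. Using the identity $|S \cap [1,m]| = \lfloor mk/n \rfloor$ and splitting into the cases $r + m \leq n$ and $r + m > n$, this reduces respectively to the superadditivity $\lfloor (r+m)k/n \rfloor \geq \lfloor rk/n \rfloor + \lfloor mk/n \rfloor$ and the elementary bound $\lfloor a \rfloor - \lfloor b \rfloor \leq \lceil a-b \rceil$ applied with $a = mk/n$, $b = (m-s)k/n$ and $s = r + m - n$.

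Combining the two directions, $\sigma$ belongs to $\{w : X(w)^{ss}_T(\mathcal{L}) \neq \emptyset\}$ while every member of this set dominates $\sigma$, so $w_{k,n} = (a_1,\ldots,a_k)$. The principal obstacle is the floor/ceiling bookkeeping involved in verifying $\tau^{(r)} \leq \sigma$ for all cyclic shifts $r$; once that elementary but fiddly inequality is in hand, the remainder is a routine application of the numerical criterion.
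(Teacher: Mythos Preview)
The paper does not prove this statement; it is quoted verbatim from \cite{bakshi2019torus} (and ultimately from \cite{kannan2009torusA}) and used as a black box. So there is no in-paper proof to compare against. That said, your argument is a valid and self-contained proof.

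Your lower bound is clean: the moment-polytope form of the Hilbert--Mumford criterion applies because Schubert varieties are normal and $\mathcal{L}$ is ample, and the inequality $|\tau\cap[1,m]|\ge |\{i:w_i\le m\}|$ for $\tau\le w$ is exactly what is needed to force $w_j\ge\lceil jn/k\rceil=a_j$. For the upper bound, the cyclic-shift construction is correct; the only wrinkle is a mislabelling in your Case~2 reduction. What one actually needs is
\[
\bigl\lfloor \tfrac{mk}{n}\bigr\rfloor \;\le\; \Bigl(k-\bigl\lfloor \tfrac{rk}{n}\bigr\rfloor\Bigr)+\bigl\lfloor \tfrac{sk}{n}\bigr\rfloor
\;=\;\bigl\lceil \tfrac{(m-s)k}{n}\bigr\rceil+\bigl\lfloor \tfrac{sk}{n}\bigr\rfloor,
\]
using $m-s=n-r$; this is $\lfloor a\rfloor-\lfloor b\rfloor\le\lceil a-b\rceil$ with $a=mk/n$ and $b=sk/n$, not $b=(m-s)k/n$ as you wrote. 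Because $\gcd(k,n)=1$ forces both $sk/n$ and $(m-s)k/n$ to be non-integers in this range, the two formulations are in fact equivalent, so the slip is harmless. With that adjusted, every $\tau^{(r)}\le\sigma$, the uniform weights $1/n$ place $0$ in the moment polytope of $X(\sigma)$, and the two halves combine to give $w_{k,n}=(a_1,\ldots,a_k)$.

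As a point of comparison, the argument in \cite{bakshi2019torus} (alluded to in the paper's introduction) proceeds instead via standard monomial theory: one writes down an explicit $T$-invariant section of $\mathcal{L}$ on $X(\sigma)$ as a product of Pl\"ucker coordinates and checks its weight directly. Your moment-polytope argument trades that explicit section for a convexity statement, which makes the necessity direction (the lower bound on $w$) more transparent, at the cost of the floor/ceiling bookkeeping you flag.
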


In \cite{kannan2018torus}, Kannan--Paramasamy--Pattanayak--Upadhyay gave a condition on $v$ and $w$ for which the semistable locus in the Richardson variety $X^v_w$ in $\grass{k}{n}$ is non empty. Bakshi--Kannan--Subhrahmanyam \cite{bakshi2019torus}, showed that there is a unique minimal dimensional Richardson variety  $X_{w_{k,n}}^{v_{k,n}}$ in the $\grass{k}{n}$ which admits a semistable point using standard monomials. We recall their theorem.

\begin{theorem}\cite[Proposition 3.2]{bakshi2019torus}\label{prop3.2}
Let $k$ and $n$ be coprime. Let $v_{k,n}$ be such that $X_{w_{k,n}}^{v_{k,n}}$ is the smallest Richardson variety in $X(w_{k,n})$ admitting semistable points.  Then $v_{k,n} = (1, a_1,\ldots, a_{k-1})$ with the $a_i$ defined as 
the smallest integer satisfying $a_i \cdot k \geq i\cdot n$. 
\end{theorem}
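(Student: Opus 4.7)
The strategy is to translate semistability of $X_{w_{k,n}}^v$ into a combinatorial balance condition on monomials in Pl\"ucker coordinates, exhibit an explicit balanced monomial supported on $X_{w_{k,n}}^{(1,a_1,\ldots,a_{k-1})}$, and apply a slot-counting argument to rule out every $v$ strictly above $(1,a_1,\ldots,a_{k-1})$ in the Bruhat order. By standard monomial theory for Richardson varieties in the Grassmannian (\cite{kreiman2002richardson}) as used in \cite{kannan2009torusA,bakshi2019torus,kannan2018torus}, $X_{w_{k,n}}^v$ carries a $T$-semistable point for $\mathcal{L}=\mathcal{M}^n$ if and only if, for some $N\ge 1$, there exist Pl\"ucker tuples $\tau^{(1)},\ldots,\tau^{(nN)}\in I(k,n)$ with $v\le \tau^{(j)}\le w_{k,n}$ such that the multiset $\bigsqcup_j \tau^{(j)}$ contains every element of $\{1,\ldots,n\}$ exactly $kN$ times; the balance is forced by the weight of $\mathcal{L}$, and any such multiset yields a non-zero $T$-invariant standard monomial on the irreducible variety $X_{w_{k,n}}^v$.

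For sufficiency I construct such a monomial explicitly by cyclic shifts. For $j=0,1,\ldots,n-1$ let $\tau^{(j)}\in I(k,n)$ be the increasing reordering of $\{((a_t-1+j)\bmod n)+1 : 1\le t\le k\}$. For each fixed $t$ the map $j\mapsto((a_t-1+j)\bmod n)+1$ is a bijection of $\{1,\ldots,n\}$, so every element of $\{1,\ldots,n\}$ appears exactly $k$ times in $\bigsqcup_{j=0}^{n-1}\tau^{(j)}$, making $\prod_{j=0}^{n-1}p_{\tau^{(j)}}$ a degree-$n$ $T$-invariant section of $\mathcal{L}$. The combinatorial heart of sufficiency is that each $\tau^{(j)}$ lies in the Bruhat interval $[(1,a_1,\ldots,a_{k-1}),\,w_{k,n}]$; equivalently $a_{i-1}\le\tau^{(j)}_i\le a_i$ for each $i$ (with $a_0:=0$). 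This rests on the balanced structure of $w_{k,n}$: since $a_i=\lceil in/k\rceil$ and $\gcd(k,n)=1$, the gaps $a_i-a_{i-1}$ lie in $\{\lfloor n/k\rfloor,\lceil n/k\rceil\}$, whence any cyclic arc of length $L$ in $\mathbb{Z}/n$ meets $w_{k,n}$ in $\lfloor Lk/n\rfloor$ or $\lceil Lk/n\rceil$ elements. Applied to the arcs $[1,a_i]$ and $[1,a_{i-1}-1]$, together with the defining inequalities $a_ik\ge in$ and $(a_{i-1}-1)k<(i-1)n$, this yields $|\tau^{(j)}\cap[1,a_i]|\ge i$ and $|\tau^{(j)}\cap[1,a_{i-1}-1]|\le i-1$, exactly the desired bounds.

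For necessity, suppose $X_{w_{k,n}}^v$ admits a semistable point but $v\not\le(1,a_1,\ldots,a_{k-1})$. Either $v_1\ge 2$---in which case no $\tau^{(j)}\ge v$ contains the element $1$ (which can only appear in position $1$), so balance fails at the entry $1$---or $v_i>a_{i-1}$ for some $i\ge 2$. In the second case every $\tau^{(j)}\ge v$ satisfies $\tau^{(j)}_i>a_{i-1}$, so each occurrence of any element of $\{1,\ldots,a_{i-1}\}$ must sit in one of positions $1,\ldots,i-1$ of some $\tau^{(j)}$. Across the $nN$ tuples this caps the total number of such occurrences at $(i-1)\cdot nN$, whereas balance demands $a_{i-1}\cdot kN$. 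Hence $a_{i-1}\cdot k\le(i-1)\cdot n$, contradicting the defining inequality $a_{i-1}\cdot k\ge(i-1)\cdot n$: equality would force $k\mid(i-1)n$, hence $k\mid i-1$ since $\gcd(k,n)=1$, which is impossible for $2\le i\le k$.

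The only step with genuine combinatorial content is the balanced-arc property of the Beatty-type sequence $a_i=\lceil in/k\rceil$ used in sufficiency; this follows either from the standard discrepancy bound for the Sturmian word of slope $k/n$, or from an inductive argument observing that each unit cyclic shift of $w_{k,n}$ perturbs the ordered tuple in a single coordinate by $\pm 1$. Everything else is a formal passage via standard monomial theory and elementary slot counting from the definition of $a_i$.
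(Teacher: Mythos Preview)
The paper does not prove this statement at all: it is quoted verbatim from \cite{bakshi2019torus} (Proposition~3.2) and used as background. So there is no in-paper proof to compare against; I can only assess your argument on its own merits and against what the paper reports about the original proof (namely that it proceeds ``using standard monomials'').

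Your argument is sound and follows the same standard-monomial paradigm. The necessity direction is clean: the slot-counting bound $a_{i-1}k\le (i-1)n$ combined with the defining inequality $a_{i-1}k\ge (i-1)n$ forces equality, and coprimality of $k,n$ rules that out for $2\le i\le k$. The sufficiency direction via cyclic shifts of $w_{k,n}$ is correct once one grants the balanced-arc property of the Beatty set $\{\lceil in/k\rceil\}$, which follows from $|\{a_1,\dots,a_k\}\cap[1,m]|=\lfloor mk/n\rfloor$ and the elementary identity $\lfloor x+y\rfloor-\lfloor x\rfloor\in\{\lfloor y\rfloor,\lceil y\rceil\}$; you identify this as the only nontrivial step, which is accurate.

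One wording issue: the product $\prod_j p_{\tau^{(j)}}$ is not, in general, a \emph{standard} monomial on $X_{w_{k,n}}^{v_{k,n}}$ since the $\tau^{(j)}$ need not form a chain. What you actually use (and what suffices) is that each factor $p_{\tau^{(j)}}$ is nonzero on the irreducible variety $X_{w_{k,n}}^{v_{k,n}}$ because $v_{k,n}\le\tau^{(j)}\le w_{k,n}$, so the product is a nonzero $T$-invariant section. It would be cleaner to drop the word ``standard'' there, or to remark that after straightening some standard monomial in the expansion must be $T$-invariant with the same support constraints.
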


\begin{example} Let us consider our running example of $\grass{4}{9}$. Using the above criteria, one checks that $w_{4,9} = (3,5,7,9)$ and $v_{4,9} = (1,3,5,7)$. The skew Young diagram that one associates with the Richardson variety is the blue region in the rectangle below
\[
\begin{ytableau}
       \none & *(red)& *(red) & *(red) & *(blue)  & *(blue)\\
      \none & *(red) &*(red) & *(blue)& *(blue) & *(green) \\
           \none &*(red) &*(blue) &*(blue)& *(green) &*(green) \\
      \none &*(blue) &*(blue) &*(green)&*(green)&*(green)
\end{ytableau}.\]
\end{example}

\begin{remark} \label{remark}
We note from \ref{prop3.2} that a  Richardson variety $X^v_w$ contains semistable points iff $X^{v_{k,n}}_{w_{k,n}} \subset X^v_w$ iff $v \leq v_{k,n}$ and $w \geq w_{k,n}$. For instance, the Richardson variety $X^{(1,2,4,7)}_{(3,6,7,9)}$ contains semistable points, however $X^{(1,2,6,7)}_{(3,6,7,9)}$ does not contain any semistable point.
\end{remark}

\section{Smooth quotients of Richardson varieties}

Let $k$ and $n$ be coprime. The following lemma is going to be the key for our main theorem. The proof of this lemma follows along the lines described in \cite[Example 3.3]{kannan2014git}.

\begin{lemma} \label{keylemma} Let $v, w \in I(k,n)$. Then  $T \backslash\mkern-6mu\backslash (X_w^v)^{ss}_T({\cal L})$ is smooth if and only if $(X_w^v)^{ss} \subseteq (X_w^v)_{sm}$.
\end{lemma}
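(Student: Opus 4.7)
The plan is to reduce both directions of the equivalence to the statement that, under the coprimality of $k$ and $n$, the torus $T$ acts freely on the semistable locus $(X_w^v)^{ss}$, so that the geometric quotient map
\[
\pi \colon (X_w^v)^{ss} \longrightarrow T \backslash\mkern-6mu\backslash (X_w^v)^{ss}_T({\cal L})
\]
is a principal $T$-bundle; smoothness then transfers across $\pi$ in both directions.

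The first step is to record the identification of semistable and stable loci. Since $\gcd(k,n)=1$, the theorem of Skorobogatov and Kannan cited in the introduction gives that every ${\cal L}$-semistable point of $\grass{k}{n}$ is stable, and this identification is inherited by the closed $T$-stable subscheme $X_w^v$. In particular every $T$-orbit in $(X_w^v)^{ss}$ is closed in $(X_w^v)^{ss}$ with finite isotropy subgroup in $T$.

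The second step, which I expect to be the main obstacle, is to upgrade \emph{finite} stabilizers to \emph{trivial} stabilizers. Here I would imitate the argument of \cite[Example~3.3]{kannan2014git}. Kumar's descent statement---that ${\cal L} = {\cal M}^n$ descends to the GIT quotient of $\grass{k}{n}$---is equivalent to asserting that the finite stabilizer $T_x$ of every semistable $x$ acts trivially on the fiber ${\cal L}_x$. Analysing the $T$-weights that appear on the non-zero Pl\"ucker coordinates of $x$ and using $\gcd(k,n)=1$, one shows that these weights generate a sublattice of the character lattice $X^*(T)$ large enough that the only subgroup of $T$ acting trivially on all of them is the trivial subgroup; hence $T_x = \{e\}$. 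Since the $T$-stabilizer of a point $x \in X_w^v$ coincides with its stabilizer in the ambient Grassmannian, the same freeness holds on $(X_w^v)^{ss}$.

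Once $T$ acts freely and semistability coincides with stability, $\pi$ is a principal $T$-bundle, and in particular a smooth faithfully flat morphism of relative dimension $\dim T$. For the implication $(X_w^v)^{ss} \subseteq (X_w^v)_{sm} \Rightarrow$ the quotient is smooth, one applies faithfully flat descent of smoothness along $\pi$; for the converse, one uses that the total space of a smooth morphism over a smooth base is itself smooth, so that smoothness of the quotient forces $(X_w^v)^{ss} \subseteq (X_w^v)_{sm}$. This yields the claimed equivalence.
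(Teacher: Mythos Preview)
Your approach is essentially the same as the paper's: both use the Kannan--Skorobogatov result that $(X_w^v)^{ss}=(X_w^v)^{s}$ under $\gcd(k,n)=1$, then argue that the $T$-action on the semistable locus is free so that $\pi$ is a principal $T$-bundle, and transfer smoothness across $\pi$ in both directions. The paper carries out the stabilizer computation by writing $x=u_{\beta_1}(t_1)\cdots u_{\beta_r}(t_r)\tau P/P$ in a Bruhat cell and observing $T_x=\bigcap_j\ker\beta_j$, whereas you sketch a Pl\"ucker-weight argument; both invoke \cite[Example~3.3]{kannan2014git}.

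One small correction: your claim that $T_x=\{e\}$ cannot hold as stated, since the center $Z(G)\cong\mu_n\subset T$ acts trivially on $G/P$ and hence lies in every stabilizer. The paper finds $T_x=Z(G)$ and then passes to the adjoint group $PGL(n)$, where the stabilizer becomes trivial; equivalently, one notes that $Z(G)$ acts trivially on ${\cal L}={\cal M}^n$ (this is precisely Kumar's descent statement), so the GIT quotient by $T$ coincides with that by $T/Z(G)$. With this adjustment your argument goes through and matches the paper's.
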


\begin{proof}
We first observe that $(X_w^v)^{ss} = (X_w^v)^{s}$, since $gcd(k,n) = 1$ (see \cite[Theorem 3.3]{kannan1998torus},\cite[Corollary 2.5]{skorobogatov1993swinnerton}). Let $T \backslash\mkern-6mu\backslash (X_w^v)^{ss}_T({\cal L})$ be smooth. Let $x \in (X_w^v)^{ss}$. If $x$ is not a smooth point in $X_w^v$ then its image in the quotient will not be a smooth point as well, violating our assumption. 

Conversely, let us assume $(X_w^v)^{ss} \subseteq (X_w^v)_{sm}$. Let $x \in (X_w^v)^{ss}$. Let $x \in B\tau P/P$ for $v \leq \tau \leq w$. Let $\beta_1,\dots, \beta_r$ be a subset of positive roots such that $x = u_{\beta_1}(t_1)\ldots u_{\beta_r}(t_r)\tau P/P$ with $u_{\beta_j}(t_j)$ in the root subgroup $U_{\beta_j}$, $t_j \neq 0$ for $j = 1,\ldots, r$. The isotropy group $T_x$ is $\cap_{i=1}^{i=k} \text{ker}(\beta_j)$. We note from  ~\cite[Example 3.3]{kannan2014git}, $T_x$ is finite and $T_x = Z(G)$, the center of $G$. Working  with the adjoint group we may assume that the stablizer is trivial. So $\schbmodt{w}^{ss}_{T}({\cal L}(n\omega_r))$ is smooth.
\end{proof}
 
We recall from \S3 that $w_{k,n} = (a_1,a_2,\ldots,a_k)$ and $v_{k,n} = (1, a_1,\ldots, a_{k-1})$, where $a_i$ is the smallest integer satisfying $a_i \cdot k \geq i\cdot n$. 
We recall the main result of Bakshi--Kannan--Subrahmanyam \cite{bakshiinprep}. 

\begin{theorem}\cite[Theorem 3.2]{bakshiinprep}\label{bks}
\label{thm:main1}Let $w = (b_1,b_2,\ldots, b_k) \in I(k,n)$ with $b_i \geq a_i$, for all $1 \leq i \leq k$. Let $X(v_1),\ldots, X(v_r)$ be $r$ components in the singular locus of $X(w)$. Then the following are equivalent
\begin{itemize}
 \item[$(1)$] $\schbmodt{w}^{ss}_{T}({\cal L})$ is smooth.
 \item[$(2)$]  We have $ v_i \ngeqslant w_{k,n}$ for all $i$.
 \item[$(3)$] Whenever $b_j \geq b_{j-1}+2$ we have $a_j \geq b_{j-1}+1$.
 \end{itemize} 
\end{theorem}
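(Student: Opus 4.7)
The plan is to prove (1) $\Leftrightarrow$ (2) by invoking Lemma \ref{keylemma} together with the known criterion for a Schubert variety to admit semistable points, and then to prove (2) $\Leftrightarrow$ (3) by combining Theorem \ref{LW90} with a direct computation in the ``diagram coordinates'' $\mathbf{a}_m := a_m - m$.

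For (1) $\Leftrightarrow$ (2): specialise Lemma \ref{keylemma} to $v = (1,2,\ldots,k)$, so that $X_w^v = X(w)$. This yields that $\schbmodt{w}^{ss}_{T}({\cal L})$ is smooth if and only if $X(w)^{ss} \subseteq X(w)_{sm}$; equivalently, no irreducible component $X(v_i)$ of the singular locus meets the semistable locus. From the Schubert-variety analogue of Theorem \ref{prop3.2} established in \cite{kannan2009torusA, bakshi2019torus}, a Schubert variety $X(u)$ contains semistable points if and only if $X(w_{k,n}) \subseteq X(u)$, i.e.\ $u \geq w_{k,n}$. Thus $X(v_i)^{ss} = \emptyset$ is exactly $v_i \ngeqslant w_{k,n}$, giving (1) $\Leftrightarrow$ (2).

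For (2) $\Leftrightarrow$ (3): following Theorem \ref{LW90}, decompose $\mathbf{w} = (p_1^{q_1}, \ldots, p_\ell^{q_\ell})$ into maximal constant blocks, and set $J_i := q_1+\cdots+q_{i-1}+1$, $K_i := q_1+\cdots+q_i$, $j_i := K_i + 1$. The $r = \ell - 1$ singular components $X(v_1), \ldots, X(v_{\ell-1})$ are then given by
\[
(v_i)_m = \begin{cases} b_m & m \notin [J_i, j_i], \\ b_m - 1 & J_i \leq m \leq K_i, \\ p_i - 1 + j_i & m = j_i. \end{cases}
\]
Within a block one has $b_m - b_{m-1} = 1$, whereas across a block-boundary $b_{j_i} - b_{j_i - 1} = (p_{i+1} - p_i) + 1 \geq 2$; hence the indices $j \geq 2$ with $b_j \geq b_{j-1}+2$ are precisely $j_1, \ldots, j_{\ell-1}$, and (3) is equivalent to $a_{j_i} \geq b_{j_i - 1} + 1$ for every $i$.

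The per-component equivalence is now a one-line computation. Fix $i$; since $b_m \geq a_m$ for all $m$ by hypothesis, checking $v_i \geq w_{k,n}$ reduces to checking $(v_i)_m \geq a_m$ on the support $m \in [J_i, j_i]$ where $v_i$ differs from $w$. At $m \in [J_i, K_i]$ this says $p_i - 1 \geq \mathbf{a}_m$, and at $m = j_i$ it says $p_i - 1 \geq \mathbf{a}_{j_i}$. Since $\mathbf{a}_m = \lceil m(n-k)/k \rceil$ is weakly (in fact strictly) increasing in $m$, the inequality at $m = j_i$ implies all those at $m < j_i$, so
\[
v_i \geq w_{k,n} \ \Longleftrightarrow\ p_i - 1 \geq \mathbf{a}_{j_i}\ \Longleftrightarrow\ b_{j_i - 1} \geq a_{j_i},
\]
which is exactly the negation of condition (3) at $j_i$. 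Ranging over $i$ yields (2) $\Leftrightarrow$ (3).

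The main obstacle is administrative rather than conceptual: one must set up the block decomposition carefully and keep track of three simultaneous indexings (the original $w$-coordinates $b_m$, the $\mathbf{w}$-sequence, and the Young-diagram rows) in order to identify the single ``offending position'' $j_i$ for each singular component. The only substantive input beyond Theorem \ref{LW90} and Lemma \ref{keylemma} is the monotonicity of $\mathbf{a}_m$, which allows the $q_i$ inequalities on $[J_i, K_i]$ to collapse to the single inequality at $m = j_i$.
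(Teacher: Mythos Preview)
Your proof is correct and matches the paper's approach. The paper does not prove this theorem separately (it is quoted from \cite{bakshiinprep}), but the Schubert half of the proof of Theorem~\ref{maintheorem} is precisely your argument: Lemma~\ref{keylemma} gives (1)$\Leftrightarrow$(2), and for (2)$\Leftrightarrow$(3) one uses the hook-removal description of Theorem~\ref{LW90} together with the monotonicity of $\mathbf a_m$ to collapse the comparison $v_i\ge w_{k,n}$ to the single inequality $\mathbf a_{j_i}\le p_i-1$ (equivalently $a_{j_i}\le b_{j_i-1}$) at the valley position.

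One small correction: your parenthetical ``(in fact strictly)'' is not true in general---for $k=3$, $n=5$ one has $\mathbf a_2=\mathbf a_3=2$---and the paper's analogous claim ``$a_t+2\le a_{t+1}$ for all $t$'' in the proof of Theorem~\ref{maintheorem} has the same issue. However, only \emph{weak} monotonicity of $\mathbf a_m=\lceil m(n-k)/k\rceil$ is needed to conclude that $p_i-1\ge\mathbf a_{j_i}$ implies $p_i-1\ge\mathbf a_m$ for all $m\le j_i$, so the argument is unaffected.
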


Our goal in this paper is to extend the above result for Richardson varieties. We now prove the main theorem of this paper. This builds on the proof of \ref{bks} from \cite{bakshiinprep}.

\begin{theorem} \label{maintheorem}
Let $k$ and $n$ be coprime. Let $v = (c_1, c_2,\ldots, c_k)$ and $w = (b_1,b_2,\ldots, b_k)$ be such that $v \leq v_{k,n}$ and $w \geq w_{k,n}$. \eat{ Let $c_1 = 1$. Let $b_i \geq a_i$ for all $1 \leq i \leq n$ and $c_i \leq a_{i-1}$ for all $2 \leq i \leq n$.} Let $X_{w_1}^{v_1},\ldots, X_{w_r}^{v_r}$ be $r$ components in the singular locus of $X(w)$. Then the following are equivalent
\begin{itemize}
 \item[$(1)$] $T \backslash\mkern-6mu\backslash (X_w^v)^{ss}_T({\cal L})$ is smooth.
 \item[$(2)$] We have $ w_i \ngeqslant w_{k,n}$ and $ v_i \nleqslant v_{k,n} $ for all $i$. 
 \item[$(3)$] Whenever $b_j \geq b_{j-1}+2$, we have $a_j \geq b_{j-1}+1$, and whenever $c_j \geq c_{j-1}+2$, we have $ a_{j-1} \leq c_j+1$.
 \end{itemize} 
\end{theorem}

\begin{proof}

We first observe that $(X_w^v)^{ss}_T({\cal L})$ is nonempty since $v \leq v_{k,n}$ and $w \geq w_{k,n}$. We know from \ref{keylemma} that $T \backslash\mkern-6mu\backslash (X_w^v)^{ss}_T({\cal L})$ is smooth if and only if $(X_w^v)^{ss} \subseteq (X_w^v)_{sm}$, where $(X_w^v)_{sm}$ denotes the smooth locus of $(X_w^v)$. So whenever $ w_i \ngeqslant w_{k,n}$ and $ v_i \nleqslant v_{k,n} $, we observe that the singular locus of $(X_w^v)$ does not contain a semistable point. Hence, $T \backslash\mkern-6mu\backslash (X_w^v)^{ss}_T({\cal L})$ is smooth. Conversely, $T \backslash\mkern-6mu\backslash (X_w^v)^{ss}_T({\cal L})$ is smooth implies $(X_w^v)^{ss} \subseteq (X_w^v)_{sm}$, which further implies that the singular locus of $(X_w^v)$ cannot contain a semistable point. Hence, $(1)$ and $(2)$ are equivalent. 

We note from \ref{billeycouskin} that the singular locus of the Richardson variety $X^v_w$ is the union of Richardson varieties which are either of the form $X^{v}_{w'}$ or of the form $X^{v'}_{w}$, where $X(w')$ (respectively, $X^{v'}$) are the Schubert varieties (respectively, opposite Schubert varieties) in the irreducible component of the singular locus of the Schubert variety $X(w)$ (respectively, $X^v$). Let $X^{v}_{w'}$ be a Richardson variety which is in the singular locus of $X^v_w$, where $X(w')$ is the Schubert variety in the singular locus of $X(w)$ obtained by removing a hook at a valley which lies in $j$-th row from bottom of the Young diagram ${\cal{Y}}(w)$. Now from our hypothesis and \ref{remark}, we note that $X^{v}_{w'}$ contains a semistable point if and only if $ X^{v_{k,n}}_{w_{k,n}}\subset X^{v}_{w'} $ if and only if $w_{k,n} \leq w'$. Let $w' = (b'_1,b'_2,\ldots, b'_k)$. Recall $w_{k,n}= (a_1, a_2,\ldots, a_k)$. By assumption, we have $b_j \geq b_{j-1}+2$. Let $P$ denote the set of all $t$ such that $b'_t \neq b_t$. Clearly, for $t > j$ we have $b'_t = b_t$.  Let $P = \{ m, m+1,\ldots, j \}$, where $1 \leq m \leq j$. We have $a_t \leq b'_t$ for all $t \in \{1, 2,\ldots, k\} =\backslash P$. We also have $a_t +2 \leq a_{t+1}$ for all $t$. Therefore, we have $a_t \leq b'_t$ for all $t \in \{1, 2,\ldots, k\}$ if and only if $a_j < b_{j-1}+1 = b'_{j}$. Equivalently, $w_{k,n} \leq w'$ if and only if $a_j < b_{j-1}+1 = b'_{j}$. So, we have proved that $w_i \ngeqslant w_{k,n}$ for all $i$ iff whenever $b_j \geq b_{j-1}+2$, we have $a_j \geq b_{j-1}+1$.

Let $X^{v'}_{w}$ be a Richardson variety which lies in the singular locus of $X^v_w$, where $X^{v'}$ is the opposite Schubert variety in the singular locus of $X^v$ which is obtained by removing a hook at a valley which lies in $j$-th row from bottom of the opposite Young diagram ${\cal{Y}}^{v'}$. As in the previous paragraph, we note that $X^{v'}_{w}$ contains a semistable point if and only if $X^{v_{k,n}}_{w_{k,n}}\subset X^{v'}_{w}$ if and only if $v_{k,n} \geq v'$. Let $v'= (c'_1, c'_2,\ldots, c'_k)$. Recall $v_{k,n}= (1, a_1,\ldots, a_{k-1})$. Let $a_0 = 1$. By assumption, we have $c_j \geq c_{j-1}+2$. In this case, we have $c'_t = c_t$ whenever $t < j$. Let $Q$ denote the set of all $t$ such that $c'_t \neq c_t$. Let $Q = \{ j, j+1,\ldots, m \}$ where $1 \leq m \leq k$. We have $a_{t-1} \geq b'_t$ for all $t \in \{1, 2,\ldots, k\} \backslash Q$. So, we will have $a_{t-1} \geq c'_t$ for all $1\leq t \leq k-1$ if and only if $a_{j-1} \geq c_j+1 = c'_{j}$. Equivalently, $v_{k,n} \geq v'$ if and only if $a_{j-1} \geq c_j+1 = c'_{j}$. Thus, we have proved that $v_i \nleqslant v_{k,n} $ for all $i$ iff whenever $c_j \geq c_{j-1}+2$, we have $ a_{j-1} \leq c_j+1$. This completes the proof.

\eat{We know the corresponding Young diagram ${\cal{Y}}(w')$ is obtained from ${\cal{Y}}(w)$ by removing a hook. Let us assume without loss of generality there is a valley at the $j$-th row from the bottom in ${\cal{Y}}(w)$. Equivalently, we have $b_j \geq b_{j-1}+2$. We claim from the prove of the 
}
\eat{
We note that $b'_t = b_t$ whenever $t > j$. So $a_t < b'_t$ for all $t > j$. And $b'_{t} \geq b_{t} -1$ for all $t < j$. If $a_j < b_{j-1}+1$ implies $a_j < b'_{j} $. and for $t<j$ $a_t < b_{t}-1 $

 We have $b'_{j-1} = b_{j-1}$ and $b'_{j} = b_{j-1}+1$ and $b'_t = b_t $ whenever $t>j$. If $a_j < b_{j-1}+1 \implies a_j < b'_{j} $. So we have $a_t < b'_t$ for all $t > j$ Whenever

Let $t$ be the smallest integer less than $j$ such that $b_{k+1}= b_k +1$ for all $t \leq k < j$. By definition of $w_j$ we have 
\[b'_p = \begin{cases} b_p & \text{$1 \leq p \leq t-1$},\\
b_p -1 & \text{$t \leq p \leq j-1$},\\
b_{j-1} &\text{$p = j$},\\
b_p& \text{$j+1 \leq p \leq r.$}
\end{cases}  
\]
Now $X(w)$ contains $X(w_{r,n})$ so $b_p \geq a_p$ for $1 \leq p \leq r$. 

Whenever $b_j \geq b_{j-1}+2$, there is a hook in the Young diagram ${\cal{Y}}(w)$ associated to the Schubert variety $X(w)$ and whenever $c_j \geq c_{j+1}+2$, there is a hook in the opposite Young diagram ${\cal{Y}}^{v}$associated to the opposite Schubert variety $X^v$. Without loss of generality, let us assume that ${\cal{Y}}(w)$ has a hook at the $t$-th row from the bottom. So 

}

\eat{
Using the description \ref{keylemma} of the singular locus of the Richardson variety, we observe that if a Richardson variety is obtained by either removing hooks from the Young diagram ${\cal{Y}}(w)$ associated to $X(w)$ or by removing hooks from the opposite Young diagram ${\cal{Y}}^{v}$ associated to $X^v$, then it is in the singular locus of $X^v_w$. Whenever $b_j \geq b_{j-1}+2$, there is a hook in the Schubert variety $X(w)$ and whenever $c_j \geq c_{j+1}+2$, there is a hook in the opposite Schubert variety $X^v$. Without loss of generality, let us assume that ${\cal{Y}}(w)$ has a hook at the $j$-th row from the bottom. Let $X^v_{w'}$ be a Richardson variety which is obtained from $X^v_w$ by removing that hook from ${\cal{Y}}(w)$. Let $w' = (b_1', b_2', \ldots, b_k')$. Note that $b_{j-1}' = b_{j-1} -1$. Now if $a_j < b_{j-1}+1$ and then we have $ w' \geqslant w_{r,n}$ and $ v_i \leqslant v_{r,n} $, in which case $X^v_{w'}$ contains semistable points, in other words the singular locus contains a semistable point. Similarly we can argue if $X^{v'}_{w}$ is a Richardson variety which is obtained from $X^v_w$ by removing that hook from the opposite Schubert variety ${\cal{Y}}^{w}$.}

\end{proof}

\section{Examples and Non-examples}

In this section we discuss some examples of smooth and singular quotients of Richardson varieties in $\grass{4}{9}$.

\begin{example} We know that $X^{(1,3,5,7)}_{(3,5,7,9)}$ is the minimal dimensional Richardson variety admitting a semistable point. Hence, $T \backslash\mkern-6mu\backslash (X^{(1,3,5,7)}_{(3,5,7,9)})^{ss}_T({\cal L})$ is smooth. 
\end{example}

\begin{example} Let us consider the Richardson variety $X^{(1,3,4,6)}_{(3,5,7,9)}$. The singular locus consists of the following Richardson varieties: $X^{(1,3,7,9)}_{(3,5,7,9)}, X^{(3,4,5,6)}_{(3,5,7,9)},X^{(1,3,4,6)}_{(2,3,7,9)} X^{(1,3,4,6)}_{(3,4,5,9)}$ and $X^{(1,3,4,6)}_{(3,5,6,7)}$. Using \ref{maintheorem} we observe that none of the Richardson varieties which lies in the singular locus contains semistable points. Hence, $T \backslash\mkern-6mu\backslash (X^{(1,3,5,7)}_{(3,5,7,9)})^{ss}_T({\cal L})$ is smooth.
\end{example}

\begin{example} Consider the Richardson variety $X^{(1,2,3,5)}_{(3,5,7,9)}$. The singular locus consists of the following Richardson varieties: $X^{(1,2,5,6)}_{(3,5,7,9)}, X^{(1,2,3,5)}_{(2,3,7,9)},X^{(1,2,3,5)}_{(3,4,5,9)}$ and $X^{(1,2,3,5)}_{(3,5,6,7)}$. Note that the Richardson variety $X^{(1,2,5,6)}_{(3,5,7,9)}$ admits semistable points. So $T \backslash\mkern-6mu\backslash (X^{(1,2,4,6)}_{(3,5,7,9)})^{ss}_T({\cal L})$ is not smooth.
\end{example}

\begin{example} Consider the Richardson variety $X^{(1,3,4,6)}_{(5,7,8,9)}$. We observe that the singular locus contains the Richardson variety $X^{(1,3,4,6)}_{(4,5,8,9)}$ which contains semistable points. Hence $T \backslash\mkern-6mu\backslash (X^{(1,3,4,6)}_{(5,7,8,9)})^{ss}_T({\cal L})$ is not smooth either.
\end{example}

\bibliography{main}
\bibliographystyle{plain}

\end{document}